\newtheorem{theorem}{Theorem}[section]
\newtheorem{lemma}[theorem]{Lemma}
\newtheorem{corollary}[theorem]{Corollary}
\newtheorem{conjecture}[theorem]{Conjecture}
\theoremstyle{definition}
\newcommand{\cL}{\mathcal{L}}
\newcommand{\cU}{\mathcal{U}}
\DeclareMathOperator{\PG}{PG}
\DeclareMathOperator{\GF}{GF}
\newcommand{\del}{\setminus \!}
\author{Adam Brown and Peter Nelson }
\address{Department of Combinatorics and Optimization,
University of Waterloo, Waterloo, Canada}
\title[On the number of hyperplanes in a matroid]{Matroids with no $U_{2,n}$-minor and many hyperplanes}
\thanks{This research was supported by a Discovery Grant from the Natural Sciences and Engineering Research Council of Canada}
\begin{document}
\begin{abstract}
We construct, for every $r \ge 3$ and every prime power $q > 10$, a rank-$r$ matroid with no $U_{2,q+2}$-minor, having more hyperplanes than the rank-$r$ projective geometry over $\GF(q)$.
\end{abstract}
\maketitle

\section{Introduction}

This note considers the following special case of a conjecture due to Bonin; see Oxley [\ref{oxley}, p. 582].
\begin{conjecture}\label{bonin}
If $q$ is a prime power and $M$ is a rank-$r$ matroid with no $U_{2,q+2}$-minor,
then $M$ has at most $\tfrac{q^r-1}{q-1}$ hyperplanes.
\end{conjecture}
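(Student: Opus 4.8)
The plan is to refute Conjecture~\ref{bonin} by constructing, for each $r\ge 3$ and each prime power $q>10$, a rank-$r$ matroid $M=M_{r,q}$ with no $U_{2,q+2}$-minor and with strictly more than $\tfrac{q^{r}-1}{q-1}$ hyperplanes, this being exactly the number of hyperplanes of $\PG(r-1,q)$. Since Kung's theorem identifies $\PG(r-1,q)$ as the \emph{unique} densest simple matroid with no $U_{2,q+2}$-minor, the counterexample cannot be obtained by enlarging $\PG(r-1,q)$; the point will instead be that a matroid with strictly fewer \emph{points} than $\PG(r-1,q)$ can have strictly more \emph{hyperplanes}, by trading part of the rigid projective local structure for a partly ``free'' one.

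I would first treat $r=3$, where the object to build is a linear space: a ground set together with a family of lines covering every pair exactly once. The idea is to take a ground set somewhat smaller than the $q^{2}+q+1$ points of $\PG(2,q)$ and a mixture of line sizes: a sparse family of ``long'' lines (of size close to the maximum $q+1$), forming a partial Steiner system in which each point lies on only a constant fraction of the maximum possible number of such lines, together with short lines, down to size $2$, covering the pairs not yet covered. The constraints to respect are that every line have at most $q+1$ points and every point lie on at most $q+1$ lines; with the ground-set size and the ``long-line degree'' of each point chosen correctly, a direct count shows the total number of lines exceeds $q^{2}+q+1$ once $q$ is past a small threshold. The skeleton of long lines need not be a perfect design $-$ a near-regular greedy or algebraic packing using only part of the available capacity at each point suffices $-$ so existence is not the difficulty; the work is in balancing the parameters so that all local counts stay at most $q+1$ while the line count is maximized. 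For general $r$, I would raise the rank by gluing a projective geometry on top of this linear space, via a generalized parallel connection along a suitable modular line, or equivalently by producing the rank-$r$ matroid whose restriction to a fixed rank-$3$ flat $F$ is the rank-$3$ example and with $M/F\cong\PG(r-4,q)$, and then argue that the surplus $\delta:=h(M_{3,q})-(q^{2}+q+1)>0$ of the base case propagates, by decomposing hyperplanes according to how they meet $F$, to a surplus of $\delta\,q^{r-3}$ hyperplanes in rank $r$.

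To verify that $M$ has no $U_{2,q+2}$-minor I would use the reformulation that a matroid has a $U_{2,q+2}$-minor if and only if, for some interval $[F_{1},F_{2}]$ of its lattice of flats with $\rank(F_{2})-\rank(F_{1})=2$, there are at least $q+2$ flats of rank $\rank(F_{1})+1$ $-$ these being exactly the points of the rank-$2$ minor $M|F_{2}/F_{1}$. Thus it suffices to check, at every level, that each line has at most $q+1$ points, each coline lies in at most $q+1$ hyperplanes, and so on in between; for the rank-$3$ construction these are precisely the line-size and point-degree bounds built in, and for the rank-$r$ version one additionally uses that the attached geometry is $\GF(q)$-representable and that the gluing operation preserves the property of having no $U_{2,q+2}$-minor.

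Finally I would count: in rank $3$ the number of hyperplanes is just the total number of lines, and in rank $r$ it equals $\tfrac{q^{r}-1}{q-1}+\delta\,q^{r-3}$, which exceeds $\tfrac{q^{r}-1}{q-1}$. The hypothesis $q>10$ enters only here: it is what makes the positive term in the base-case line count dominate the lower-order error terms. The main obstacle is exactly the tension driving the construction: forcing the hyperplane count above the projective value pushes the local structure toward the free matroid, which is where long-line minors first appear, so the delicate part is to choose the ground-set size, the per-point long-line degree, and the long-line skeleton so that \emph{every} length-$2$ interval of flats has at most $q+1$ members while the global count still wins; a secondary obstacle is making the rank-raising step genuinely well defined as a matroid and confirming that the hyperplane count multiplies as claimed.
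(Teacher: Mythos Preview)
Your overall plan---build a rank-$3$ counterexample and then lift it to higher rank---matches the paper's, and your description of the rank-$3$ mechanism (trade part of the rigid projective incidence structure for a free one, gaining lines while keeping every point on at most $q+1$ lines) is exactly what drives the paper's construction $M(q,t)$. But the paper makes this explicit: start from $\PG(2,q)$, fix a point $e$, on each of the $q+1$ lines through $e$ retain only a $t$-subset, declare the long lines to be the restrictions of the $q^2$ lines not through $e$, and declare the short lines to be all $2$-subsets of each retained $t$-set. Your ``near-regular greedy or algebraic packing'' is not yet a construction; you would still have to produce the linear space and verify the degree bounds.

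Where the approaches genuinely diverge is the lift to rank $r$. The paper does \emph{not} use generalized parallel connection with a projective geometry; it takes the ordinary parallel connection of $\lfloor (r-1)/2\rfloor$ copies of the rank-$3$ example along a single common \emph{point} $e$ (and, for even $r$, one extra copy of $U_{2,\ell+1}$). Closure of $\cU(\ell)$ under parallel connection is then a one-line $3$-connectivity argument, and the hyperplane count is immediate: choosing one line avoiding $e$ in each copy gives a hyperplane, yielding at least $(W_2^e(N))^{(r-1)/2}$ of them. Your route needs more. First, the rank-$3$ example must contain a $(q+1)$-point line to identify with a projective line. Second, your surplus formula $\delta\,q^{r-3}$ tacitly assumes every line of $M_{3,q}$ meets the gluing line $L$; with the many $2$-point lines your construction produces off $L$ this fails, and the hyperplane decomposition by $H\cap F$ acquires extra cases you have not analysed. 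Third, that generalized parallel connection along a modular flat preserves $\cU(q)$ is not the triviality it is for ordinary parallel connection; your interval-check would have to be carried out, not merely announced. None of these obstacles is necessarily fatal, but they are real gaps, and the paper's point-gluing sidesteps all of them at the cost of roughly halving the rank contribution per copy---which is still ample for the stated bound.
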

The conjectured bound is attained by the projective geometry $\PG(r-1,q)$, and is also equal to the number of points in $\PG(r-1,q)$; an analogous upper bound on the number of points in a matroid with no $U_{2,q+2}$-minor was proved by Kung [\ref{kungextremal}], and the conjecture seems natural given the symmetry between points and hyperplanes in a projective geometry. The conjecture was also supported by a result of the second author [\ref{nelson}] stating that, for a fixed $k$ and large $r$, the number of rank-$k$ flats in a rank-$r$ matroid with no $U_{2,q+2}$-minor does not exceed the number of rank-$k$ flats in a projective geometry. 

However, Conjecture~\ref{bonin} fails; Geelen and Nelson [\ref{gn}] gave counterexamples for $r=3$ and $q \ge 7$. As observed in [\ref{gn}], it still seemed plausible that those rank-$3$ counterexamples were the only ones: as in the problem of classifying projective planes, sporadic behaviour in rank $3$ that disappears for larger rank can easily occur. 
We show using a variant of the construction in [\ref{gn}] that in fact, the conjecture fails much more dramatically.
\begin{theorem}\label{main}
For all integers $r \geq 4$ and $\ell \ge 10$ there exists a rank-$r$ matroid $M$ with no $U_{2,\ell+2}$ minor and more than $\frac{\ell^r-1}{\ell-1}$ hyperplanes.
\end{theorem}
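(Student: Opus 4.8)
The plan is to realise $M$ as a generalized parallel connection of a small rank-$3$ ``Geelen--Nelson-type'' matroid with a projective geometry of large rank, and to show that this operation inflates the number of hyperplanes by a factor of roughly $\ell^{2}$ per unit of rank --- faster than the factor $\ell$ enjoyed by projective geometries themselves --- so that a modest head start in rank $3$ becomes a decisive lead in rank $r$ once $r\ge 4$.

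Fix $\ell\ge 10$ and let $q$ be a prime power with $q\le \ell$, chosen as large as possible (so $q\ge 7$, and the slack $\ell+1-(q+1)$ in permissible line sizes is exactly what will let the gadget beat $\PG(2,q)$). First I would construct the gadget: a simple rank-$3$ matroid $N$ with no $U_{2,\ell+2}$-minor --- equivalently, every line of $N$ has at most $\ell+1$ points and every point lies on at most $\ell+1$ lines --- carrying a distinguished line $\Lambda$ with exactly $q+1$ points, and arranged so that a large number $\mu$ of lines of $N$ are \emph{skew} to $\Lambda$ (meet it in no point). This is the announced variant of the construction in [\ref{gn}]: start from a dense incidence structure (a projective plane of order $q$, or a transversal-design-type linear space on fewer than $q^{2}+q+1$ points), ``thicken'' it within the room afforded by $\ell+1$, and attach $\Lambda$ so that it is almost disjoint from the dense part; the outcome should have $\mu$ comparable to the total number of lines of $N$, and in particular $\mu\ge q^{2}+q+2$ will suffice. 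Set $M:=P_{\Lambda}(N,\PG(r-2,q))$, the generalized parallel connection of $N$ and $\PG(r-2,q)$ along $\Lambda$; since every flat of $\PG(r-2,q)$ is modular this is well defined, and $r(M)=r(N)+r(\PG(r-2,q))-r(\Lambda)=3+(r-1)-2=r$.

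Next I would check that $M$ has no $U_{2,\ell+2}$-minor. Each line of a generalized parallel connection $P_{F}(M_{1},M_{2})$ is either a line of $M_{1}$, a line of $M_{2}$, or a line meeting $E(M_{i})$ in a flat of rank at most $2$, so all lines of $M$ have at most $\ell+1$ points; and contracting or deleting an element of $M$ yields, after simplification, a matroid built from generalized parallel connections of minors of $N$ and $\PG(r-2,q)$, so an induction on $|E(M)|$ shows no minor of $M$ is isomorphic to $U_{2,\ell+2}$ (here one uses that neither $N$ nor $\PG(r-2,q)$ has such a minor, the latter because $q\le\ell$).

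The heart of the proof is the count of hyperplanes. Using the description of the flats of $P_{\Lambda}(M_{1},M_{2})$ --- a flat is a set $A\cup B$ with $A$ a flat of $M_{1}$, $B$ a flat of $M_{2}$, and $A\cap \Lambda=B\cap\Lambda$, of rank $r_{M_{1}}(A)+r_{M_{2}}(B)-r_{\Lambda}(A\cap B)$ --- one sorts the rank-$(r-1)$ flats of $M$ by how they meet $E(N)$, $E(\PG(r-2,q))$ and $\Lambda$. There are only a bounded number of families; the dominant one consists of flats $A\cup B$ where $A$ is a line of $N$ skew to $\Lambda$ and $B$ is a flat of $\PG(r-2,q)$ of rank $r-3$ skew to $\Lambda$ (these combine to rank $2+(r-3)=r-1$), and since the number of codimension-$2$ flats of $\PG(r-2,q)$ skew to a fixed line equals $q^{2(r-3)}$, this family alone contributes $\mu\,q^{2(r-3)}$ hyperplanes. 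Adding the remaining (positive) families one obtains $h(M)\ge \mu\,q^{2(r-3)}$, and comparing with $\tfrac{\ell^{r}-1}{\ell-1}$ reduces the theorem to an inequality of the shape $\mu\, q^{2(r-3)} > \tfrac{\ell^{r}-1}{\ell-1}$: for $r\ge 5$ this is immediate once $\mu\ge 1$ and $q$ is chosen reasonably, and for $r=4$ it becomes $\mu q^{2}\gtrsim \ell^{3}$, which the gadget delivers when $\ell\ge 10$. I expect the main obstacles to be exactly (i) constructing $N$ --- producing, for every $\ell\ge 10$, a rank-$3$ matroid with no $U_{2,\ell+2}$-minor, a line of $q+1$ points, and many lines skew to it, which is where the precise constant $10$ enters; and (ii) the bookkeeping in the minor check and the hyperplane count, in particular ensuring that no badly placed point on $\Lambda$ creates a long line after contraction, and that the ``skew'' families of flats are genuinely flats of the claimed rank.
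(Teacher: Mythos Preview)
Your plan is genuinely different from the paper's, and the difference is instructive. The paper never uses generalized parallel connection along a line; instead it takes the rank-$3$ gadget $N$ of Lemma~\ref{counterexamples}, fixes a \emph{point} $e\in E(N)$, and forms the iterated parallel connection $N_1\oplus_e\cdots\oplus_e N_k$ (with a further $U_{2,\ell+1}$ attached at $e$ for even rank). Closure of $\cU(\ell)$ under parallel connection along a point is the one-line Lemma~\ref{twosums}, and the hyperplane count is immediate: choosing one line not through $e$ from each $N_i$ gives $(W_2^e(N))^{(r-1)/2}$ hyperplanes, and $W_2^e(N)>(\ell+\tfrac{2}{3})^2$ finishes the arithmetic for all $r\ge 4$. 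No skewness condition, no projective geometry, no GPC.

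Your route has a real obstruction that you flag but do not resolve: the gadget. You need a rank-$3$ matroid $N\in\cU(\ell)$ carrying a $(q{+}1)$-point line $\Lambda$ together with many lines \emph{skew} to $\Lambda$. In a rank-$3$ matroid $\Lambda$ is a hyperplane, so any other line meets it unless the matroid is far from plane-like; in $\PG(2,q)$ there are no skew lines at all, and the Blokhuis-type construction $M(q,t)$ used in the paper has essentially all of its lines meeting any fixed long line. Meanwhile, if $\Lambda$ has $q{+}1$ points with $q$ close to $\ell$, then contracting any off-$\Lambda$ point shows that the off-$\Lambda$ part of $N$ is tightly constrained, leaving little room to manufacture $\mu$ of order $\ell$ (which you need at $r=4$, since $\mu q^{2}>\ell^{3}+\ell^{2}+\ell+1$ forces $\mu\gtrsim\ell$). ``Attaching $\Lambda$ so that it is almost disjoint from the dense part'' does not work as stated: a parallel connection of a line to a rank-$3$ matroid has rank $4$, and any genuine rank-$3$ attachment forces $\Lambda$ to interact with the dense lines. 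The minor check for GPC along a line is also substantially more delicate than Lemma~\ref{twosums}; your inductive sketch is plausible, but the behaviour of $P_{\Lambda}(N,\PG)$ under contraction of off-$\Lambda$ points on either side must be tracked carefully and is not the same as for a $2$-sum. In short, the paper's point-based parallel connection sidesteps exactly the two issues you identify as obstacles, at the cost of a slightly weaker asymptotic exponent.
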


In fact, for large $r$ and $\ell$ our counterexamples contain at least $(c\ell)^{3r/2}$ hyperplanes for some absolute constant $c \approx 2^{-7}$. In light of this, is not obvious what the correct upper bound should be; while it seems difficult to asymptotically improve on the construction we use, our counterexamples are not even $3$-connected, so quite possibly richer matroids with more hyperplanes exist. Our order-$(cq)^{3/2r}$ construction still has many fewer hyperplanes than the upper bound of $q^{r(r-1)}$ given in [\ref{g}]. However, we cautiously conjecture that projective geometries give the correct upper bound in the case of very high rank and connectivity; a matroid is \emph{round} if its ground set is not the union of two hyperplanes, or equivalently if it is vertically $k$-connected for all $k$. 

\begin{conjecture}
	Let $\ell \ge 2$ be an integer. If $M$ is a round matroid with sufficiently large rank and with no $U_{2,\ell+2}$-minor, then $M$ has at most $\tfrac{\ell^{r(M)-1}}{\ell-1}$ hyperplanes.
\end{conjecture}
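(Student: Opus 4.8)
The plan is to combine a clean bound for $\GF(q)$-representable matroids with structural results on highly connected matroids that have no long-line minor. Throughout, set $r = r(M)$ and let $q$ be the largest prime power with $q \le \ell$, so that the target bound $\frac{\ell^r-1}{\ell-1}$ is exactly the number of hyperplanes (equivalently, points) of $\PG(r-1,\ell)$, and $\frac{q^r-1}{q-1} \le \frac{\ell^r-1}{\ell-1}$. Since a matroid is round if and only if it is vertically $k$-connected for all $k$, and contraction preserves roundness, the hypothesis is robust under the reductions one would want to perform.

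First I would dispose of the representable case, which needs no roundness. Suppose $N$ is a simple rank-$r$ matroid representable over $\GF(q)$ with $q \le \ell$; then $N$ is a spanning restriction $\PG(r-1,q)|X$, and $N$ has no $U_{2,\ell+2}$-minor automatically. Each hyperplane $H$ of $N$ is a rank-$(r-1)$ flat, so its $\PG(r-1,q)$-closure $\cl_{\PG}(H)$ is a hyperplane of $\PG(r-1,q)$; since $H$ is a flat of $N$ of rank $r-1$ one checks $H = X \cap \cl_{\PG}(H)$, so the map $H \mapsto \cl_{\PG}(H)$ is injective. Hence $N$ has at most $\frac{q^r-1}{q-1} \le \frac{\ell^r-1}{\ell-1}$ hyperplanes. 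The substance of the conjecture is therefore to show that a round matroid of large rank with no $U_{2,\ell+2}$-minor is, for the purpose of counting hyperplanes, no worse than such an $N$.

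The central step is structural: I would invoke (or establish) that for each $\ell$ there are integers $n$ and $k$ such that every round matroid of rank at least $n$ with no $U_{2,\ell+2}$-minor is a rank-$k$ perturbation of a $\GF(q)$-representable matroid, with $q$ the largest prime power at most $\ell$. This is the natural specialization to the round case of the growth-rate and matroid-minors structure theory, in which highly connected members of a class excluding a uniform minor are, up to bounded-rank perturbation, representable over a fixed finite field; the density extremum being attained by $\GF(q)$-geometries is what forces the field to be $\GF(q)$. Given such a description, one would bound the number of hyperplanes of $M$ in terms of those of the underlying representable matroid $N$, plus a controlled error from the rank-$k$ perturbation, and combine this with the representable bound above.

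The main obstacle is the tightness of the bound rather than its order of magnitude. When $\ell$ is not a prime power there is slack $\frac{\ell^r-1}{\ell-1}-\frac{q^r-1}{q-1}$, which is exponentially large in $r$ and should comfortably absorb the perturbation error; the difficulty concentrates in the case $\ell = q$, where the target equals the projective-geometry count exactly and no perturbation slack is permitted. There one cannot afford even a bounded-rank perturbation to create new hyperplanes, so the bounded-perturbation description must be upgraded to an exact one: a stability statement to the effect that a round, large-rank, $U_{2,q+2}$-minor-free matroid whose hyperplane count is near-extremal must actually be a restriction of $\PG(r-1,q)$. Proving that near-extremal roundness eliminates the perturbation --- equivalently, that the low-connectivity phenomenon responsible for the counterexamples in Theorem~\ref{main} genuinely cannot survive vertical $k$-connectivity for all $k$ --- is where I expect the real work to lie, and it is the reason the statement is posed only as a conjecture.
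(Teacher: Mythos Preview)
The statement you are addressing is a \emph{conjecture}; the paper offers no proof of it whatsoever and explicitly labels it as cautious speculation. There is therefore no proof in the paper against which to compare your attempt.

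Your proposal is not a proof either, and to your credit you say so in the final paragraph. The representable case is handled correctly: a spanning restriction of $\PG(r-1,q)$ has at most as many hyperplanes as $\PG(r-1,q)$ via the injection $H \mapsto \cl_{\PG}(H)$. But the ``central step'' --- that every sufficiently high-rank round matroid in $\cU(\ell)$ is a bounded-rank perturbation of a $\GF(q)$-representable matroid for the largest prime power $q \le \ell$ --- is not an available theorem in the form you need; the announced matroid-minors structure theory gives statements of roughly this shape for minor-closed classes, but the precise control required here (particularly forcing the field to be exactly $\GF(q)$ rather than some subfield, and bounding the perturbation uniformly) is not something one can simply invoke. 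Even granting such a statement, you correctly identify that in the critical case $\ell = q$ the target bound equals the projective-geometry count exactly, so no perturbation error can be tolerated, and one would need a genuine stability theorem that does not currently exist. Your write-up is thus a plausible heuristic for \emph{why} the conjecture might be true and \emph{where} a proof might eventually come from, but it is not a proof --- which is precisely the authors' position as well.

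One minor discrepancy: the paper's conjecture has numerator $\ell^{r(M)-1}$, whereas you work throughout with $\ell^{r(M)}-1$. The latter is the projective-geometry count and almost certainly the intended bound, but you should note the difference.
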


\section{Rank Three}\label{smallsection}

We follow the notation of Oxley [\ref{oxley}]. If $M_1,M_2$ are matroids with $E(M_1) \cap E(M_2)$ equal to $\{e\}$ for some nonloop $e$ in both matroids, then the \emph{parallel connection} of $M_1$ and $M_2$, which we denote $M_1 \oplus_e M_2$, is the unique matroid $M$ on ground set $E(M_1) \cup E(M_2)$ for which $M|E(M_i) = M_i$ for each $i$, and $M \del e$ is the $2$-sum of $M_1$ and $M_2$. We write $\cU(\ell)$ for the class of matroids with no $U_{2,\ell+2}$-minor. If $e \in E(M)$ then $W_2(M)$ denotes the number of lines of $M$, and $W_2^e(M)$ denotes the number of lines of $M$ \emph{not} containing $e$.

\begin{lemma}\label{twosums}
	If $\ell \ge 1$, then $\cU(\ell)$ is closed under parallel connections. 
\end{lemma}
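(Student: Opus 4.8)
The plan is to show that if $M = M_1 \oplus_e M_2$ with both $M_1, M_2 \in \cU(\ell)$, then every minor of $M$ that is a line (rank-$2$ uniform matroid) has at most $\ell+1$ points, so $M$ has no $U_{2,\ell+2}$-minor. The key structural fact I would use is the standard description of minors of a parallel connection: for any $X \subseteq E(M_2) \del e$, the matroid $(M_1 \oplus_e M_2) \del X$ and $(M_1 \oplus_e M_2) \con X$ can again be written as parallel connections (or $2$-sums, or direct sums, or simply as minors of $M_1$ alone) of minors of $M_1$ and minors of $M_2$. Concretely, parallel connection commutes with deletion and contraction in the two ``sides'' in the expected way: deleting or contracting elements of $E(M_i)\del e$ only affects the $M_i$ factor, while contracting $e$ yields the $2$-sum and deleting $e$ yields the $2$-sum as well (by definition $M\del e$ is the $2$-sum), and both $2$-sum and parallel connection of matroids with no $U_{2,\ell+2}$-minor are themselves in $\cU(\ell)$ — but of course that last clause about $2$-sums is essentially what we are trying to prove, so I should instead argue directly rather than invoke it.

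So the cleaner route: it suffices to show that if $N$ is a minor of $M = M_1 \oplus_e M_2$ and $N \cong U_{2,n}$, then $n \le \ell+1$. Write $N = M \con C \del D$ for disjoint $C, D$. Partition $C = C_1 \cup C_2$ and $D = D_1 \cup D_2$ according to which side of the parallel connection the elements lie in (elements of $C, D$ other than $e$ lie in exactly one of $E(M_1)\del e$, $E(M_2)\del e$; handle $e \in C$ or $e \in D$ as a special case). Using the commutation identities, reduce to the case where $N$ itself is a parallel connection $N_1 \oplus_f N_2$ with each $N_i$ a minor of $M_i$, hence $N_i \in \cU(\ell)$, OR $N$ is a $2$-sum of such, OR $N$ is a direct sum, OR $N$ is a minor of a single $M_i$. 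In the last case we are done immediately. For the direct-sum case, a direct sum that is $U_{2,n}$ with $n \ge 2$ forces one summand to be a single element and the other to be $U_{2,n-1}$ or the whole thing to be $U_{2,n}$ on one side — either way one of the $N_i$ has a $U_{2,n}$-restriction and $n\le \ell+1$.

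The main case, and the main obstacle, is handling the $2$-sum and parallel-connection reductions: I need to argue that if $U_{2,n}$ arises as $N_1 \oplus_f N_2$ or as a $2$-sum of $N_1$ and $N_2$, then $n \le \ell+1$. For the parallel connection: in $N_1 \oplus_f N_2 \cong U_{2,n}$, the element $f$ is a point of a line, and the rank-$2$ structure forces each $N_i$ to have rank at most $2$ with ground set a subset of a line through $f$; if $N_i$ has $k_i$ points then $n = k_1 + k_2 - 1$ while $N_i$ itself is a line with $k_i$ points (or is degenerate), giving $k_i \le \ell+1$ and hence... this is not quite enough, so the actual trick — as in [\ref{gn}] and standard in the area — is that if $N_1 \oplus_f N_2$ has rank $2$, then since ranks add minus one, $r(N_1) + r(N_2) = 3$, so one $N_i$ has rank $1$; a rank-$1$ matroid on a parallel connection contributes only the single point $f$, forcing $N \cong N_{3-i}$ to be a restriction of $M_{3-i}$, done. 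The $2$-sum case is identical since $M \del e$ being the $2$-sum means any $U_{2,n}$-minor avoiding $e$ is a $U_{2,n}$-minor of $M$, which we then induct on, or one directly observes $U_{2,n}$ is $3$-connected (indeed round) for $n \ge 3$ and hence cannot be a proper $2$-sum at all, so it must live inside a single $M_i$. I expect the bookkeeping of the $e \in C \cup D$ cases and verifying the commutation identities to be the only real work; the conceptual content is that $U_{2,n}$ for $n \ge 3$ is connected enough to be ``trapped'' on one side. I would close with a short induction on $|E(M)|$ to make the reduction rigorous.
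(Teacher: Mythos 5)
Your proposal is correct and follows essentially the same route as the paper: one expresses any minor of $M_1 \oplus_e M_2$ as a direct sum or parallel connection of minors of $M_1$ and $M_2$, and the $3$-connectivity of $U_{2,\ell+2}$ then forces such a minor to lie entirely on one side, giving the contradiction. The paper's proof is exactly this argument in three sentences, so the additional case analysis in your plan (rank-$1$ sides, direct sums) is subsumed by the connectivity observation you already make.
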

\begin{proof}
	Let $M,N \in \cU(\ell)$ with $E(M) \cap E(N) = \{e\}$ and suppose for a contradiction that $M \oplus_e N$ has a minor $L \cong U_{2,\ell+2}$. Note that $L$ has the form $M' \oplus N'$ or $M' \oplus_e N'$ for some minors $M'$ and $N'$ of $M$ and $N$ respectively. Since $L$ is $3$-connected, either $M' = \varnothing$ or $N' = \varnothing$, so $L$ is a minor of $M$ or $N$, a contradiction. 
\end{proof}

We now construct counterexamples to Conjecture~\ref{bonin}. 
This first construction appears in [\ref{gn}] attributed to Blokhuis. We include the proof, which is repeated essentially verbatim, for completeness. 

\begin{lemma}\label{construction}
	Let $q \ge 3$ be a prime power and $t$ be an integer with $3 \le t \le q$. There is a rank-$3$ matroid $M(q,t)$ with no $U_{2,q+t}$-minor such that $W_2(M(q,t)) = q^2 + (q+1)\tbinom{t}{2}$. 
\end{lemma}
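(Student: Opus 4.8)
The plan is to realize $M(q,t)$ as an explicit modification of the rank-$3$ projective geometry $\PG(2,q)$, in the style of [\ref{gn}]. Delete a point $p$ from $\PG(2,q)$: this leaves the $q+1$ ``pencil lines'' through $p$, now of size $q$, and the $q^2$ ``secant lines'' avoiding $p$, of size $q+1$, each meeting every pencil line in exactly one point. Now choose a $t$-element subset $T_i$ of each pencil line, set $W:=\bigcup_i T_i$, restrict $\PG(2,q)\del p$ to $W$, and finally \emph{relax} each $T_i$ --- declare its $t$ points to be in general position instead of collinear; call the result $M(q,t)$. For the analysis I would fix coordinates with $p=[1:0:0]$, so that the pencil lines are indexed by $\PG(1,q)$ and each is identified with $\GF(q)$, the secants of non-vertical slope become the lines of an auxiliary $\AG(2,q)$ sitting on $q$ of the pencil lines, and the number of points a secant loses on passing to $W$ is controlled by how its affine line meets the chosen subsets.

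The step I expect to be the main obstacle is choosing the $T_i$ so that \emph{every} one of the $q^2$ secants still meets $W$ in at least two points: this is a double-blocking condition of exactly the flavour of Blokhuis's work on point sets and directions, and it is why the construction survives for all prime powers $q$, whereas $\PG(2,q+t-2)$ need not even exist. A purely linear choice of the $T_i$ fails --- it always leaves a direction along which $W$ is a union of parallel lines, and the parallel secants in that direction miss $W$ --- so a genuinely nonlinear choice is required, and this is the combinatorial heart of the construction; I would take it from the reference. Granting such a choice, the object produced is a rank-$3$ \emph{paving} matroid: after relaxation the only lines of size $\ge 3$ are the (restricted) secants, and any two of those meet in at most one point since two lines of a projective plane do, so declaring these the long lines and all remaining pairs the $2$-point lines defines a matroid with no further axiom-checking. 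It has rank $3$ because, for instance, two points of one pencil class together with a point of another are non-collinear.

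With $M(q,t)$ in hand, two bookkeeping steps finish the proof. First, counting lines: relaxing $T_i$ replaces its single line by the $\binom t2$ pairs inside $T_i$, contributing $(q+1)\binom t2$ lines in all; the double-blocking condition ensures all $q^2$ secants survive as genuine rank-$2$ flats, pairwise distinct (two points lie on a unique line) and distinct from the pencil pairs (which lie within one pencil class); since every line of $M(q,t)$ is one of these, $W_2(M(q,t))=q^2+(q+1)\binom t2$. Second, ruling out a $U_{2,q+t}$-minor via the rank-$3$ characterization: a rank-$3$ matroid has no $U_{2,q+t}$-minor exactly when every line has at most $q+t-1$ points (otherwise restrict) and every point lies on at most $q+t-1$ lines (otherwise contracting that point and simplifying gives $U_{2,k}$ with $k\ge q+t$). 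Here the restricted secants have at most $q+1\le q+t-1$ points and the relaxed pencil pairs have $2$; and a point $x$ of pencil class $i$ lies on exactly $t-1$ relaxed pencil lines (the lines of $U_{3,t}$ through $x$) and, since every secant meets $W$ in at least two points, on exactly as many restricted secants as it lay on secants of $\PG(2,q)$, namely $q$ --- a total of exactly $q+t-1$. Hence $M(q,t)$ has no $U_{2,q+t}$-minor, completing the argument.
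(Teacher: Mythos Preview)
Your approach is exactly the paper's, and your bookkeeping---the line count $q^2 + (q+1)\tbinom{t}{2}$ and the exclusion of a $U_{2,q+t}$-minor via the point-degree bound $q + (t-1)$---matches the paper's argument essentially verbatim.

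The one place you misjudge is what you call the ``main obstacle'': you describe the requirement that every secant meet $W$ in at least two points as a Blokhuis-style double-blocking problem needing a ``genuinely nonlinear'' choice, and defer it to the reference. In fact it is elementary, and this is precisely what the hypothesis $t \ge 3$ buys. Choose three lines $L_1,L_2,L_3$ of $\PG(2,q)$ not through $p$ with $L_1 \cap L_2 \cap L_3 = \varnothing$. Each pencil line meets $L_1 \cup L_2 \cup L_3$ in at most three points, so one may simply require each $T_i$ to contain those points; then $L_1 \cup L_2 \cup L_3 \subseteq W$. Now any line of $\PG(2,q)$ other than $L_1,L_2,L_3$ meets each $L_j$ in one point, and these cannot all coincide (else $L_1 \cap L_2 \cap L_3 \ne \varnothing$), so it meets $W$ in at least two points; and $L_1,L_2,L_3$ themselves lie entirely in $W$. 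No blocking-set machinery is needed---the ``nonlinear'' ingredient you anticipate is just a triangle. With this filled in, your proof is complete and coincides with the paper's.
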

\begin{proof}
	Let $N \cong \PG(2,q)$. Let $e \in E(N)$ and let $L_1,L_2,L_3$ be distinct lines of $N$ not containing $e$ and so that $L_1 \cap L_2 \cap L_3$ is empty. Note that every line of $M$ other than $L_1, L_2$ and $L_3$ intersects $L_1 \cup L_2 \cup L_3$ in at least two and at most three elements. 
	
	Let $\cL$ be the set of lines of $N$ and $\cL_e$ be the set of lines of $N$ containing $e$. For each $L \in \cL_e$, let $T(L)$ be a $t$-element subset of $L - \{e\}$ containing $L \cap (L_1 \cup L_2 \cup L_3)$. Observe that the $T(L)$ are pairwise disjoint. Let $X = \cup_{L \in \cL_e} T(L)$, noting that $L_1 \cup L_2 \cup L_3 \subseteq X$ and so each line in $\cL$ intersects $X$ in at least two elements. Note that a simple rank-$3$ matroid is completely determined by its set of lines, as every nonspanning circuit is contained in such a line. Let $M(q,t)$ be the simple rank-$3$ matroid with ground set $X$ whose set of lines is $\cL_1 \cup \cL_2$, where $\cL_1 = \{L \cap X: L \in \cL - \cL_e\}$, and $\cL_2$ is the collection of two-element subsets of the sets $T(L)$ for $L \in \cL_e$. Note that $\cL_1$ and $\cL_2$ are disjoint. Every $f \in X$ lies in $q$ lines in $\cL_1$ and in $(t-1)$ lines in $\cL_2$, so $M(q,t)$ has no $U_{2,q+t}$-minor. Moreover, we have $\cL_1 = |\cL - \cL_e| = q^2$ and $\cL_2 = |\cL_e|\binom{t}{2} = (q+1)\binom{t}{2}$. This gives the lemma. 
	\end{proof}

The following lemma slightly strengthens one in [\ref{gn}].

\begin{lemma}\label{counterexamples}
	If $\ell$ is an integer with $\ell \ge 10$, then there exists $M \in \cU(\ell)$ such that $r(M) = 3$ and $W_2(M) > \ell^2 + \tfrac{7}{3}\ell + 4$. 
\end{lemma}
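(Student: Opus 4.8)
The plan is to obtain the required matroid directly from Lemma~\ref{construction} by a good choice of the pair $(q,t)$. If we arrange $q+t=\ell+2$, then Lemma~\ref{construction} produces a rank-$3$ matroid $M(q,t)$ with no $U_{2,\ell+2}$-minor, i.e.\ $M(q,t)\in\cU(\ell)$, and with $W_2(M(q,t))=q^2+(q+1)\tbinom{t}{2}$. So the whole problem reduces to choosing a prime power $q$ for which, setting $t:=\ell+2-q$, the hypotheses $3\le t\le q$ of Lemma~\ref{construction} hold and $q^2+(q+1)\tbinom{t}{2}>\ell^2+\tfrac{7}{3}\ell+4$.

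Substituting $t=\ell+2-q$ turns the constraint $3\le t\le q$ into $\lceil\tfrac{\ell+2}{2}\rceil\le q\le\ell-1$. Write $F(q):=q^2+(q+1)\tbinom{\ell+2-q}{2}$, regarded as a function of a real variable. A routine derivative computation (equivalently: the substitution $s=\ell+2-q$ makes $2F'$ a downward parabola in $s$ that is positive at both $s=5$ and $s=\tfrac{\ell+2}{2}$) shows that $F$ is strictly decreasing on $[\tfrac{\ell+2}{2},\ell-3]$; moreover $F(\ell-3)=(\ell-3)^2+10(\ell-2)=\ell^2+4\ell-11$, which exceeds $\ell^2+\tfrac{7}{3}\ell+4$ exactly when $\ell>9$. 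Hence it suffices to exhibit a prime power $q$ with $\lceil\tfrac{\ell+2}{2}\rceil\le q\le\ell-3$: for any such $q$ the integer $t=\ell+2-q$ satisfies $3\le 5\le t\le q$, so Lemma~\ref{construction} applies and $W_2(M(q,t))=F(q)\ge F(\ell-3)>\ell^2+\tfrac{7}{3}\ell+4$, as wanted.

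It remains to check that the window $[\lceil\tfrac{\ell+2}{2}\rceil,\ell-3]$ contains a prime power for every integer $\ell\ge10$. This interval has length about $\ell/2$, so once $\ell$ is past a modest explicit bound the existence of a prime in it follows from Bertrand's postulate — more comfortably from Nagura's theorem that $(n,\tfrac{6}{5}n)$ contains a prime for all $n\ge25$, applied with $n=\lceil\tfrac{\ell+2}{2}\rceil$, which places the prime safely below $\ell-3$ — and the finitely many remaining small values of $\ell$ are dispatched by direct inspection (for instance $q=7,t=5$ for $\ell=10$, and genuine prime powers such as $q=9$ in the rare cases like $\ell=13$ where the window misses every prime). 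This uniform guarantee of a prime power in the narrow range forced by the twin requirements $t\le q$ and $t\ge5$ is the only genuinely delicate point; everything else is a substitution into Lemma~\ref{construction} together with the elementary monotonicity of $F$.
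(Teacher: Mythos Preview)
Your argument is correct. It coincides with the paper's treatment of the range $10 \le \ell < 127$: there too one takes $M(q,\ell+2-q)$ with a prime power $q$ in $[\lceil\tfrac{\ell+2}{2}\rceil,\ell-3]$ and checks the same inequality (the paper phrases the check via the quadratic $h(x)=f_q(x)-g(x)$, observing $h(q+1)<0<h(q+3)$, rather than via monotonicity of $F$, but this is the same computation). The genuine difference is in how large $\ell$ is handled. You push the same construction through all $\ell$ by invoking Nagura's theorem to guarantee a prime in the window, with finitely many small cases done by hand. The paper instead switches construction for $\ell \ge 127$: it takes $t=q$ with $q$ a power of $2$ in $(\tfrac{\ell+2}{4},\tfrac{\ell+2}{2}]$, so that $M(q,q)\in\cU(\ell)$ and $W_2(M(q,q))>\tfrac{1}{2}q^3>(\ell+2)^2$. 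This buys two things: it avoids any appeal to prime-distribution results (dyadic intervals always contain a power of $2$), and it yields the much stronger cubic-in-$q$ lower bound that is later reused in the corollary about $(2^{-7}\ell^3)^{(r-2)/2}$ hyperplanes. Your route is more uniform and perfectly adequate for the lemma as stated; the paper's route is more self-contained and feeds directly into the asymptotic corollary.
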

\begin{proof}
	 If $\ell \ge 127$, let $q$ be a power of $2$ such that $\tfrac{1}{4}(\ell+2) < q \le \tfrac{1}{2}(\ell+2)$.  We have $2q \le \ell+2$ so $M(q,q) \in \cU(\ell)$, and 
	 \[W_2(M(q,q)) = q^2 + \tbinom{q}{2}(q+1) >\tfrac{1}{2}q^3 >  \tfrac{1}{128} (\ell+2)^3 > (\ell+2)^2.\] 

	If $10 \le \ell < 127$, then it is easy to check that there is some prime power $q \in \{7,9,13,19,32,59,113\}$ such that $\tfrac{1}{2}(\ell+2) \le q \le \ell-3$. Note that $4 < \ell+2-q \le q$. Define real quadratic polynomials $f_q(x)$ by $f_q(x) = q^2 + (q+1)\tbinom{x+2-q}{2}$ and $g(x) = x^2+\tfrac{7}{3}x + 4$. The function $h(x) = f_q(x)-g(x)$ has positive leading coefficient and $h(q+1) < 0$, while $h(q+3) = \tfrac{5}{3}q	 -1 > 0$; thus $h(x) > 0$ for every integer $x \ge q+3$. Now the matroid $M = M(q,\ell+2-q)$ satisfies $M \in \cU(\ell)$ and $W_2(M) - (\ell^2 + \tfrac{7}{3}\ell+4) = h(\ell) > 0$. 
\end{proof}\

\section{Large Rank}\label{highsection}
We now give a construction that extends rank-$3$ counterexamples to counterexamples in arbitrary rank.

\begin{lemma}\label{mr}
	Let $\ell \ge 2$ be an integer. Let $N \in \cU(\ell)$ be a rank-$3$ matroid and $e \in E(N)$. Then for each integer $r \ge 3$ there is a rank-$r$ matroid $M_r \in \cU(\ell)$ such that
	\begin{itemize}
		\item if $r$ is odd, then $M_r$ has at least $(W_2^e(M))^{(r-1)/2}$ hyperplanes, 
		\item if $r$ is even, then $M_r$ has at least $\ell (W_2^e(M))^{(r-2)/2}$ hyperplanes. 
	\end{itemize}
\end{lemma}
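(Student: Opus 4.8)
The plan is to build $M_r$ by repeatedly taking parallel connections of copies of $N$ (using Lemma~\ref{twosums} to stay inside $\cU(\ell)$), inserting one copy of $U_{2,\ell+1}$ when $r$ is even in order to correct the parity. First note that we may assume $e$ is a nonloop of $N$: otherwise $W_2^e(N)=0$, the asserted bound is vacuous, and $U_{r,r}\in\cU(\ell)$ does the job. Now fix pairwise-disjoint-except-at-$e$ copies $N_1,\dots,N_k$ of $N$ and a copy $U$ of $U_{2,\ell+1}$, chosen so that any two of these matroids meet exactly in $\{e\}$. If $r=2k+1$ is odd, set $M_r=N_1\oplus_e\cdots\oplus_e N_k$; if $r=2k+2$ is even, set $M_r=N_1\oplus_e\cdots\oplus_e N_k\oplus_e U$. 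Parallel connection is associative, so this is well-defined.

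The elementary properties are quick. Since $r(M_1\oplus_e M_2)=r(M_1)+r(M_2)-1$, an induction gives $r(M_r)=r$ in both cases. Since $N\in\cU(\ell)$ and $U_{2,\ell+1}\in\cU(\ell)$ (there are only $\ell+1$ points on its single line), iterating Lemma~\ref{twosums} gives $M_r\in\cU(\ell)$.

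The substance is exhibiting enough hyperplanes, and for this I would prove the following by induction on $t$: in a parallel connection $P=M_1\oplus_e\cdots\oplus_e M_t$, if $F_i$ is a flat of $M_i$ with $e\notin F_i$ for each $i$, then $F:=F_1\cup\cdots\cup F_t$ is a flat of $P$ with $r_P(F)=\sum_i r_{M_i}(F_i)$ and $e\notin\cl_P(F)$ --- equivalently $r_P(F\cup e)=r_P(F)+1$, which is the form fed to the induction. The step rests on the standard formula for the rank function $r$ of $M_1\oplus_e M_2$: for $X$ with $X_i=X\cap E(M_i)$,
\[ r(X)=\min\{\, r_{M_1}(X_1)+r_{M_2}(X_2),\ \ r_{M_1}(X_1\cup e)+r_{M_2}(X_2\cup e)-1 \,\}\quad\text{if }e\notin X, \]
and $r(X)=r_{M_1}(X_1)+r_{M_2}(X_2)-1$ if $e\in X$, combined with the observation that a flat $F_i$ avoiding $e$ satisfies $r_{M_i}(F_i\cup e)=r_{M_i}(F_i)+1$; from these one checks directly that adjoining to $F$ any element of another part, or $e$ itself, strictly increases the rank, so $F$ is closed of the stated rank.

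Applying this: a line $L_i$ of $N_i$ not through $e$ is a rank-$2$ flat avoiding $e$, and there are $W_2^e(N)$ of them; a point $p\ne e$ of $U$ is a rank-$1$ flat avoiding $e$, and there are $\ell$ of them. Hence, for odd $r=2k+1$, each choice of lines $L_1,\dots,L_k$ (one per $N_i$, none through $e$) yields a flat $L_1\cup\cdots\cup L_k$ of rank $2k=r(M_r)-1$, i.e.\ a hyperplane, and distinct choices give distinct hyperplanes since $L_i$ is recovered as the intersection with $E(N_i)$; this produces $(W_2^e(N))^{k}=(W_2^e(N))^{(r-1)/2}$ hyperplanes. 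For even $r=2k+2$, each choice of $L_1,\dots,L_k$ together with a point $p\ne e$ of $U$ yields a hyperplane $L_1\cup\cdots\cup L_k\cup\{p\}$ of rank $2k+1=r(M_r)-1$, again with distinct choices giving distinct hyperplanes, for a total of $\ell(W_2^e(N))^{k}=\ell(W_2^e(N))^{(r-2)/2}$. The only real design decision is inserting $U_{2,\ell+1}$ to reach even ranks, since parallel-connecting rank-$3$ matroids lands only on odd ranks; and the one genuinely technical point is the inductive claim above --- that gluing basepoint-avoiding hyperplanes across a parallel connection produces a hyperplane --- which is a short computation with the rank function once set up correctly.
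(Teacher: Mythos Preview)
Your construction is identical to the paper's---parallel-connect $k=\lfloor (r-1)/2\rfloor$ copies of $N$ at $e$, and for even $r$ add one copy of $U_{2,\ell+1}$---and your hyperplane count proceeds in the same way. The paper simply asserts that the unions $\bigcup L_i$ (and $\bigcup L_i\cup\{x\}$) are hyperplanes, whereas you supply the rank-function verification; this extra detail is correct and fills in what the paper leaves to the reader.
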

\begin{proof}
	For each $r \ge 3$, let $k = \left\lfloor\tfrac{1}{2}(r-1)\right\rfloor$ and let $N_1,\dotsc,N_k,L$ be matroids, any two of whose ground sets have intersection $\{e\}$, such that each $N_i$ is isomorphic to $N$ under an isomorphism fixing $e$, while $L \cong U_{2,\ell+1}$. For each odd $r \ge 3$, let $M_r$ be the parallel connection of $N_1, \dotsc, N_k$, and for each even $r \ge 4$, let $M_r$ be the parallel connection of $M_{r-1}$ and $L$. By Lemma~\ref{twosums} we have $M_r \in \cU(\ell)$ for all $r$. Note that $r(M_r) = r$ for each $r$. 
	
	Let $L_1, \dotsc, L_k$ be lines of $N_1, \dotsc, N_k$ respectively that do not contain $e$. If $r$ is odd, then $\cup_{i=1}^k L_i$ is a hyperplane of $M_r$; it follows that $M_r$ has at least $(W_2^e(N))^k = (W_2^e(N))^{(r-1)/2}$ hyperplanes, as required. If $r$ is even, then for each $x \in L-\{e\}$ the set $\cup_{i=1}^k L_i \cup \{x\}$ is a hyperplane of $M_r$. Thus $M_r$ has at least $|L-\{e\}|(W_2^e(N))^k = \ell (W_2^e(N))^{(r-2)/2}$ hyperplanes. 
\end{proof}
Using this, we can restate and prove Theorem~\ref{main}.

\begin{theorem}
	If $r \ge 4$ and $\ell \ge 10$ are integers, then there is a rank-$r$ matroid $M \in \cU(\ell)$ having more than $\tfrac{\ell^r-1}{\ell-1}$ hyperplanes. 
\end{theorem}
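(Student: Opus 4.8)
The plan is to combine Lemma~\ref{counterexamples} and Lemma~\ref{mr}, choosing the rank-$3$ base matroid $N$ in Lemma~\ref{mr} to be one of the counterexamples produced by Lemma~\ref{counterexamples}. The key point is that Lemma~\ref{mr} counts hyperplanes in terms of $W_2^e(N)$, the number of lines of $N$ \emph{not} through a distinguished point $e$, whereas Lemma~\ref{counterexamples} bounds $W_2(N)$ from below. So first I would observe that for any rank-$3$ matroid $N \in \cU(\ell)$ and any $e \in E(N)$, we have $W_2^e(N) = W_2(N) - (\text{number of lines through }e) \ge W_2(N) - (\ell+1)$, since through a fixed point there are at most $\ell+1$ lines in a matroid with no $U_{2,\ell+2}$-minor. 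Feeding the bound $W_2(N) > \ell^2 + \tfrac{7}{3}\ell + 4$ from Lemma~\ref{counterexamples} into this gives $W_2^e(N) > \ell^2 + \tfrac{7}{3}\ell + 4 - (\ell+1) = \ell^2 + \tfrac{4}{3}\ell + 3$.

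Next I would pick such an $N$ and $e$ and apply Lemma~\ref{mr} to obtain, for the given $r \ge 4$, a matroid $M_r \in \cU(\ell)$ of rank $r$. It remains to check the two cases of Lemma~\ref{mr} against the target $\tfrac{\ell^r - 1}{\ell - 1} = \ell^{r-1} + \ell^{r-2} + \dots + 1 < \tfrac{\ell^r}{\ell-1} \le \tfrac{\ell^r}{\ell/2} = 2\ell^{r-1}$ for $\ell \ge 2$ (in fact, using $\ell \ge 10$ one gets the cleaner estimate $\tfrac{\ell^r-1}{\ell-1} < \tfrac{10}{9}\ell^{r-1}$, so it suffices to beat $\tfrac{10}{9}\ell^{r-1}$). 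For odd $r$, Lemma~\ref{mr} gives at least $(W_2^e(N))^{(r-1)/2} > (\ell^2 + \tfrac{4}{3}\ell + 3)^{(r-1)/2}$ hyperplanes; expanding, $(\ell^2 + \tfrac{4}{3}\ell + 3)^{(r-1)/2} > (\ell^2)^{(r-1)/2} \cdot \bigl(1 + \tfrac{4}{3\ell}\bigr)^{(r-1)/2} = \ell^{r-1}\bigl(1 + \tfrac{4}{3\ell}\bigr)^{(r-1)/2}$, and since $r \ge 5$ when $r$ is odd and at least $4$, the factor $\bigl(1+\tfrac{4}{3\ell}\bigr)^{(r-1)/2}$ exceeds $1 + \tfrac{4}{3\ell} \ge \tfrac{10}{9}$ (as $\ell \le 12 \Rightarrow \tfrac{4}{3\ell} \ge \tfrac19$; for larger $\ell$ one uses a larger exponent or simply more terms of the binomial expansion — this is the routine part). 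For even $r$, the bound is $\ell(W_2^e(N))^{(r-2)/2} > \ell(\ell^2 + \tfrac{4}{3}\ell+3)^{(r-2)/2} > \ell \cdot \ell^{r-2}\bigl(1+\tfrac{4}{3\ell}\bigr)^{(r-2)/2} = \ell^{r-1}\bigl(1+\tfrac{4}{3\ell}\bigr)^{(r-2)/2}$, which again beats $\tfrac{10}{9}\ell^{r-1}$ once $(r-2)/2 \ge 1$, i.e. $r \ge 4$.

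The main obstacle — such as it is — is purely bookkeeping: one must be careful that the small polynomial slack ($\tfrac73\ell + 4$ versus the $\ell + 1$ lost through $e$, leaving $\tfrac43\ell+3$) is genuinely enough to overcome the geometric-series correction $\tfrac{\ell^r-1}{\ell-1}$ versus $\ell^{r-1}$, \emph{uniformly} for all $\ell \ge 10$ and all $r \ge 4$. The cleanest way to handle this uniformly is to note $\tfrac{\ell^r-1}{\ell-1} = \ell^{r-1} + \tfrac{\ell^{r-1}-1}{\ell-1} \le \ell^{r-1} + \tfrac{\ell^{r-1}}{\ell-1} \le \ell^{r-1} + \tfrac{1}{9}\ell^{r-1}$ for $\ell \ge 10$, and then verify $\bigl(\ell^2 + \tfrac43\ell + 3\bigr)^{(r-1)/2} \ge \tfrac{10}{9}\ell^{r-1}$ for odd $r \ge 5$ and $\ell\bigl(\ell^2+\tfrac43\ell+3\bigr)^{(r-2)/2} \ge \tfrac{10}{9}\ell^{r-1}$ for even $r \ge 4$; in both cases dropping all but the first two binomial terms of $(\ell^2 + \tfrac43\ell+3)^m = \ell^{2m} + m\ell^{2m-1}\cdot\tfrac43 + \cdots \ge \ell^{2m}\bigl(1 + \tfrac{4m}{3\ell}\bigr)$ with $m = \lfloor (r-1)/2\rfloor \ge 1$ reduces everything to checking $1 + \tfrac{4}{3\ell} \ge \tfrac{10}{9}$, i.e. $\ell \le 12$, together with a separate (easier) argument using $m \ge 2$ or $m \ge 3$ when $\ell$ is larger. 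I would present this as two short displayed inequalities rather than a case analysis, and remark that the rank-$3$ counterexample of Lemma~\ref{counterexamples} is being fed in with $e$ chosen so that $W_2^e(N)$ is as in the displayed bound. This completes the proof. \QED
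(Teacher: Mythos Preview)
Your overall strategy is exactly the paper's: take the rank-$3$ matroid $N$ from Lemma~\ref{counterexamples}, observe $W_2^e(N) > \ell^2 + \tfrac{4}{3}\ell + 3$ since $e$ lies on at most $\ell+1$ lines, and feed this into Lemma~\ref{mr}. The difference is only in the final numerical comparison, and there your argument has a genuine gap.

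The problem is the replacement of $\tfrac{\ell^r-1}{\ell-1}$ by $\tfrac{10}{9}\ell^{r-1}$. Your lower bound on the hyperplane count is of order $\ell^{r-1}\bigl(1+\tfrac{4}{3\ell}\bigr)^m$ with $m = \lfloor (r-1)/2\rfloor$; for \emph{fixed} $r$ and $\ell \to \infty$ this ratio tends to $1$, so it cannot exceed $\tfrac{10}{9}$ once $\ell$ is large. Concretely, for $r=4$ (so $m=1$) and $\ell = 13$ you need $1+\tfrac{4}{39} \ge \tfrac{10}{9}$, which is false; for $r=5$ and $\ell=30$ one checks $(943)^2 < \tfrac{10}{9}(30)^4$. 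Your proposed escape (``use a larger exponent when $\ell$ is larger'') is unavailable, since $m$ depends only on $r$.

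The fix is not to throw away so much. The paper observes that $\ell^2 + \tfrac{4}{3}\ell + 3 > (\ell+\tfrac{2}{3})^2$, giving hyperplane counts exceeding $(\ell+\tfrac23)^{r-1}$ (odd $r$) or $\ell(\ell+\tfrac23)^{r-2}$ (even $r$), and then verifies $\min\bigl((\ell+\tfrac23)^{r-1},\,\ell(\ell+\tfrac23)^{r-2}\bigr) > \tfrac{\ell^r-1}{\ell-1}$ by an easy induction on $r$. Equivalently, you could keep your expansion but replace the target $\tfrac{10}{9}\ell^{r-1}$ by the sharper $\tfrac{\ell}{\ell-1}\,\ell^{r-1}$; then the needed inequality $1 + \tfrac{4m}{3\ell} > 1 + \tfrac{1}{\ell-1}$ does hold for all $\ell \ge 5$ and $m \ge 1$.
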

\begin{proof}
	Let $\ell \ge 10$ and $r \ge 3$. Let $N \in \cU(\ell)$ be a matroid given by Lemma~\ref{counterexamples} for which $r(N) = 3$ while $W_2(N) > \ell^2+\tfrac{7}{3}\ell+4$. Let $e \in E(N)$; since $e$ is in at most $\ell+1$ lines we have \[W_2^e(N) > (\ell^2+\tfrac{7}{3}\ell+4) - (\ell+1) > (\ell+\tfrac{2}{3})^2.\] Let $M_r$ be the matroid given by Lemma~\ref{mr}. If $r$ is odd, then $M_r$ has at least $(W_2^e(N))^{(r-1)/2} > (\ell+\tfrac{2}{3})^{r-1}$ hyperplanes. If $r$ is even, then $M_r$ has at least $\ell(W_2^e(N))^{(r-2)/2} > \ell(\ell+\tfrac{2}{3})^{r-2}$ hyperplanes. An easy induction on $r$ verifies that $\min((\ell+\tfrac{2}{3})^{r-1},\ell(\ell+\tfrac{2}{3})^{r-2}) > \tfrac{\ell^r-1}{\ell-1}$ for all $r \ge 4$, and the result follows. 
\end{proof}

Finally, we show that for large $r$ and $\ell$, we can construct examples having dramatically more than $\tfrac{\ell^r-1}{\ell-1}$ hyperplanes. Using the fact that for all $\epsilon > 0$ and all large $\ell$, there is a prime between $(1-\epsilon)\ell$ and $\ell$, one could improve the constant to anything under $2^{-4}$ for large $r$. 

\begin{corollary}
	If $r \ge 3$ and $\ell \ge 10$ are integers, then there is a rank-$r$ matroid $M \in \cU(\ell)$ having at least $(2^{-7}\ell^3)^{(r-2)/2}$ hyperplanes. 
\end{corollary}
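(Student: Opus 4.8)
The plan is to combine Lemma~\ref{mr} with a strong rank-$3$ counterexample, but instead of using the generic bound from Lemma~\ref{counterexamples} (which only gives order $\ell^2$), I would go back to the explicit matroids $M(q,q)$ from Lemma~\ref{construction}, since these have $W_2(M(q,q)) = q^2 + \binom{q}{2}(q+1)$, which is of order $\tfrac12 q^3$ rather than order $q^2$. The point is that feeding a cubic-in-$q$ value of $W_2^e$ into the $(r-1)/2$ or $(r-2)/2$ power in Lemma~\ref{mr} produces roughly $q^{3(r-2)/2}$ hyperplanes, and we want $q$ as large as possible relative to $\ell$.

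First I would fix $\ell \ge 10$ and choose a prime power $q$ that is a power of $2$ with $\tfrac14(\ell+2) < q \le \tfrac12(\ell+2)$; such $q$ exists because consecutive powers of $2$ differ by a factor of $2$, and $\ell \ge 10$ ensures the interval $(\tfrac14(\ell+2),\tfrac12(\ell+2)]$ is long enough to contain one (the same choice as in the proof of Lemma~\ref{counterexamples}). Since $2q \le \ell+2$, i.e.\ $q + q \le \ell + 2$, we have $q + q < (\ell+2) + 1$, so $M(q,q)$ has no $U_{2,q+q}$-minor and hence no $U_{2,\ell+2}$-minor; thus $N := M(q,q) \in \cU(\ell)$ with $r(N) = 3$. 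Next I would bound $W_2^e(N)$ from below for a chosen $e \in E(N)$: since $e$ lies in at most $\ell+1 \le 2q-1 < 2q$ lines, and $q \ge 3$ gives $\tfrac12 q^3 > 2q$ comfortably (indeed $\tfrac12 q^3 \ge \tfrac92 q > 2q$), we get
\[
W_2^e(N) \;\ge\; W_2(N) - (\ell+1) \;>\; \tfrac12 q^3 + \tbinom{q}{2} - 2q \;>\; \tfrac14 q^3,
\]
using that $q^2 + (q+1)\binom{q}{2} > \tfrac12 q^3$ and keeping a crude slack. (A slightly more careful bookkeeping shows $W_2^e(N) > \tfrac14 q^3$ with room to spare for all $q \ge 4$; the power-of-$2$ choice means $q \ge 4$ whenever $\ell \ge 14$, and the small cases $\ell \in \{10,\dots,13\}$ force $q = 4$ directly.)

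Then I would apply Lemma~\ref{mr} with this $N$ and $e$. For odd $r$, $M_r$ has at least $(W_2^e(N))^{(r-1)/2} > (\tfrac14 q^3)^{(r-1)/2} \ge (\tfrac14 q^3)^{(r-2)/2}$ hyperplanes; for even $r$, $M_r$ has at least $\ell (W_2^e(N))^{(r-2)/2} > (\tfrac14 q^3)^{(r-2)/2}$ hyperplanes, since $\ell \ge 1$. In either case $M_r \in \cU(\ell)$ and has at least $(\tfrac14 q^3)^{(r-2)/2}$ hyperplanes. Finally I would convert the bound in $q$ to a bound in $\ell$: from $q > \tfrac14(\ell+2) > \tfrac14 \ell$ we get $\tfrac14 q^3 > \tfrac14 \cdot \tfrac{1}{64}\ell^3 = 2^{-8}\ell^3$, which is not quite $2^{-7}\ell^3$; to recover the stated constant I would instead use $q > \tfrac14(\ell+2) \ge \tfrac14 \ell \cdot \tfrac{\ell+2}{\ell}$ and absorb the slack from $W_2^e(N) > \tfrac12 q^3$ rather than $\tfrac14 q^3$ — namely $W_2^e(N) > \tfrac12 q^3 - (\ell+1) > \tfrac12 q^3 - 2q \ge \tfrac{7}{16}q^3$ for $q \ge 4$, and $\tfrac{7}{16}\cdot\tfrac{1}{64} = \tfrac{7}{1024} > 2^{-8}$, still short; cleanest is to note $q$ can be taken $> \tfrac14(\ell+2)$ so $\tfrac12 q^3 > \tfrac12 \cdot \tfrac{1}{64}(\ell+2)^3 = 2^{-7}(\ell+2)^3 > 2^{-7}\ell^3$, giving $W_2^e(N) > 2^{-7}\ell^3 - 2q$ and then, since the constant $2^{-7}$ has slack against $2^{-8}$ once one uses $(\ell+2)^3$ in place of $\ell^3$, one concludes $(W_2^e(N))^{(r-2)/2} \ge (2^{-7}\ell^3)^{(r-2)/2}$.

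The main obstacle is the constant-chasing at the end: the power-of-$2$ spacing costs a factor of up to $2$ in $q$, hence up to $8$ in $q^3$, and combined with the $\tfrac12$ from $W_2(M(q,q)) \approx \tfrac12 q^3$ one is working with $2^{-4}\cdot 2^{-3} = 2^{-7}$ with essentially no slack, so the lower-order terms ($\binom{q}{2}$ helps, $\ell+1$ hurts) and the use of $(\ell+2)^3$ versus $\ell^3$ must be tracked carefully; the remark following the corollary about primes in short intervals is exactly the observation that removing the power-of-$2$ restriction removes the wasteful factor and pushes the constant up toward $2^{-4}$. Everything else — membership in $\cU(\ell)$, the rank count, and the hyperplane count — is immediate from Lemmas~\ref{construction} and~\ref{mr}.
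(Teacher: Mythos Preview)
Your approach is the paper's: take $N = M(q,q)$ with $q$ a prime power in $\bigl(\tfrac14(\ell+2),\tfrac12(\ell+2)\bigr]$, bound $W_2^e(N)$ from below, and invoke Lemma~\ref{mr}. The only difference is in the bookkeeping, and there your argument does not close. The inequality ``$\ell+1 \le 2q-1$'' is the wrong way round (from $2q \le \ell+2$ one gets $2q-1 \le \ell+1$, not the reverse), and none of your three attempts at the constant actually establishes $W_2^e(N) > 2^{-7}\ell^3$: the $\tfrac14 q^3$ and $\tfrac{7}{16}q^3$ bounds give only $2^{-8}\ell^3$, and your final hand-wave about ``slack against $2^{-8}$ from $(\ell+2)^3$ versus $\ell^3$'' cannot recover the constant $2^{-7}$, since $2^{-7}$ is precisely the constant you are trying to prove.

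The paper's one-line cleanup is this. Allow $q$ to be any prime power $\ge 5$ in the interval, not just a power of $2$; such $q$ exists for every $\ell \ge 10$, and this avoids the $q=4$ case your choice forces when $\ell \in \{10,\dots,13\}$. The exact count is $W_2(N) = \tfrac12 q^3 + q^2 - \tfrac12 q$, and for $q \ge 5$ one has $q^2 - \tfrac12 q \ge 4q$. Since $q > \tfrac14(\ell+2)$ gives $4q > \ell+1$, the at-most-$(\ell+1)$ lines through $e$ are absorbed entirely by this lower-order term, leaving
\[
W_2^e(N) \;>\; \tfrac12 q^3 \;>\; \tfrac12\bigl(\tfrac{\ell}{4}\bigr)^3 \;=\; 2^{-7}\ell^3,
\]
with no further tracking required. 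The point you were missing is that the surplus $q^2 - \tfrac12 q$ in $W_2(N)$ over $\tfrac12 q^3$ is itself already larger than $\ell+1$, so one never has to dip below $\tfrac12 q^3$ at all.
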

\begin{proof}
	Let $q \ge 5$ be a prime power so that $\tfrac{1}{4}(\ell+2) < q \le \tfrac{1}{2}(\ell+2)$. Let $N  = M(q,q)$ as defined in Lemma~\ref{construction}. We have $N \in \cU(\ell)$ and $W_2(N) = q^2 + (q+1)\tbinom{q}{2} \ge \tfrac{1}{2}q^3 + 4q > 2^{-7}\ell^3 + \ell+1$, where we use $q \ge 5$. Let $e \in E(N)$; since $e$ is in at most $\ell+1$ lines of $N$ we have $W_2^e(N) > 2^{-7}\ell^3$. The result follows from Lemma~\ref{mr}. 
\end{proof}

\section*{References}

\newcounter{refs}

\begin{list}{[\arabic{refs}]}
{\usecounter{refs}\setlength{\leftmargin}{10mm}\setlength{\itemsep}{0mm}}

\item\label{g}
J. Geelen,
Small cocircuits in matroids,
European J. Combin. 32 (2011), 795-801.

\item\label{gn}
J. Geelen, P. Nelson, 
The number of lines in a matroid with no $U_{2,n}$-minor,
European J. Combin. 50 (2015), 115-122.

\item\label{kungextremal}
J.P.S. Kung,
Extremal matroid theory, in: Graph Structure Theory (Seattle WA, 1991), 
Contemporary Mathematics 147 (1993), American Mathematical Society, Providence RI, ~21--61.

\item\label{nelson}
P. Nelson, 
The number of rank-$k$ flats in a matroid with no $U_{2,n}$-minor,
J. Combin. Theory. Ser. B 107 (2014), 140-147.

\item \label{oxley}
J. G. Oxley, 
Matroid Theory,
Oxford University Press, New York, 2011.

\end{list}
\end{document}